\newtheorem{thm}{Theorem}[section]
\newtheorem{cor}[thm]{Corollary}
\newtheorem{lem}[thm]{Lemma}
\newtheorem{prop}[thm]{Proposition}
\theoremstyle{definition}
\theoremstyle{remark}
\newtheorem{question}[thm]{Question}
\numberwithin{equation}{section}
\newcommand{\ve}{\varepsilon}
\begin{document}

\title[ $J$-flow on K\"{a}hler surfaces]{The $J$-flow on K\"{a}hler surfaces: a boundary case$^1$}%
\author{Hao Fang}%
\address{Hao Fang, Department of Mathematics, University of Iowa}%
\email{hao-fang@uiowa.edu}%
\author{Mijia Lai}
\address{Mijia Lai, Department of Mathematics,
University of Rochester}%
\email{lai@math.rochester.edu}%
\author{Jian Song}
\address{Jian Song, Department of Mathematics,
Rutgers}
\email{jiansong@math.rutgers.edu}
\author{Ben Weinkove}
\address{Ben Weinkove, Mathematics Department,
Northwestern University}
\email{weinkove@math.ucsd.edu}

\thanks{}%
\footnotetext[1]{Research supported in part by NSF grants DMS-1008249, DMS-08047524 and DMS-1105373.  This work was carried out while the fourth-named author was a member of the Mathematics Department at UC San Diego.}

\begin{abstract}
We study the $J$-flow on K\"{a}hler surfaces when the K\"ahler class lies on  the boundary of the open cone for which global smooth convergence holds, and satisfies a nonnegativity condition.
We obtain a $C^0$ estimate and show that the $J$-flow converges smoothly to a singular K\"{a}hler metric away from a finite number of  curves of negative self-intersection on the surface.  We discuss an application to the Mabuchi energy functional on K\"ahler surfaces with ample canonical bundle.
\end{abstract}
\maketitle
\section{Introduction}

The $J$-flow is a parabolic flow on K\"{a}hler manifolds with two K\"{a}hler classes. It was defined by Donaldson~\cite{Do} in the setting of moment maps and by Chen~\cite{Ch1} as the gradient flow of the $\mathcal{J}$-functional appearing in his formula for the Mabuchi energy~\cite{M}.

The $J$-flow is defined as follows. Let $X$ be a compact K\"{a}hler manifold with two K\"{a}hler metrics $\omega$ and $\chi$ in different K\"{a}hler classes $[\omega]$ and $[\chi]$ respectively. Let $\mathcal{P}_{\chi}$ be the space of smooth  $\chi$-plurisubharmonic functions on $X$.  Namely,
\[
\mathcal{P}_{\chi}=\{ \varphi\,|\, \chi_{\varphi}:=\chi+dd^{c}\varphi>0\}.
\]
Then the $J$-flow is a flow defined in $\mathcal{P}_{\chi}$ by
\begin{align} \label{j-flow}
\frac{\partial}{\partial t} \varphi &=c-\frac{ n\chi_{\varphi}^{n-1}\wedge \omega}{\chi_{\varphi}^n}, \qquad
\varphi(0) =\varphi_{_0}\in \mathcal{P}_{\chi},
\end{align}
where $c$ is the topological constant given by
\[
c=\frac{n[\chi]^{n-1}\cdot[\omega]}{[\chi]^n}.
\]
A stationary point of (\ref{j-flow}) gives a critical K\"{a}hler metric $\tilde{\chi} \in [\chi]$ satisfying
\begin{align} \label{critical}
c \tilde{\chi}^{n}= n\tilde{\chi}^{n-1}\wedge \omega.
\end{align}

Donaldson noted that a smooth critical metric  exists only if the cohomological condition $[c \chi - \omega]>0$ holds  \cite{Do}.  In complex dimension 2, Chen \cite{Ch1} showed that this necessary condition is sufficient for the existence of a smooth critical metric, by observing that in this case (\ref{critical}) is equivalent to the complex Monge-Amp\`ere equation solved by Yau \cite{Y} (see (\ref{dma}) below).  Chen also established the long time existence for the $J$-flow (\ref{j-flow}) with any initial data \cite{Ch2}.
The fourth-named author \cite{W1,W2} showed that the $J$-flow converges to a critical metric if the cohomological condition $[c\chi -(n-1)\omega]>0$ holds.  In particular, if $X$ is a K\"ahler surface, a necessary and sufficient condition for convergence of the flow to a smooth critical metric is Donaldson's cohomological condition $[c \chi - \omega]>0$.
  
In \cite{SW}, the third and fourth-named authors found a necessary and sufficient condition for the convergence of the $J$-flow in higher dimensions, which we now explain. Define
\begin{align} \label{2}
\mathcal{C}_{\omega}:=\{ \, [\chi]>0 \, \, |\, \exists \chi'\in [\chi]\quad \text{such that} \quad c\chi'^{n-1}-(n-1)\chi'^{n-2}\wedge \omega>0\}.
\end{align}
Then it was shown in \cite{SW} that 
 the $J$-flow (\ref{j-flow}) converges smoothly to the critical metric solving (\ref{critical}) if and only if $[\chi]\in \mathcal{C}_{\omega}$.


%

 In \cite{FLM, FL1}, the $J$-flow has been generalized to the general inverse $\sigma_k$ flow. An analogous necessary and sufficient condition is found to ensure the smooth convergence of the flow.

The behavior of the $J$-flow in the case when the condition $[\chi]\in \mathcal{C}_{\omega}$ does \emph{not} hold is still largely open.  However, recent progress has been made by the first and second named authors \cite{FL2} in the case of a family of K\"ahler manifolds satisfying the Calabi symmetry condition.   It was shown (in the more general case of the inverse $\sigma_k$ flow) that if the initial metric satisfies the Calabi symmetry, the flow converges to a K\"ahler current which is the sum of a K\"ahler metric with a conic singularity and a current of integration along a divisor.

We consider the case when $X$ is a  K\"ahler surface. As discussed above, a necessary and sufficient condition for convergence of the flow to a smooth critical metric is:
\begin{align}
[c\chi-\omega]>0.
\end{align}
Donaldson \cite{Do} remarked that if this condition fails, then one might expect the $J$-flow to blow up over some curves of negative self-intersection.
 It was observed in \cite{SW} (see Proposition 4.5 there) that, applying the results of 
 Buchdahl~\cite{Bu} and Lamari~\cite{Lam}, there exist a finite number $N \ge 0$, say, of irreducible curves $C_i$ with $C_i^2<0$ on $X$ and positive real numbers $a_i$ such that $[c\chi-\omega]-\sum_{i=1}^{N} a_i [C_i]$ is K\"{a}hler.   Moreover, it was shown in \cite{SW} that, at least for some sequence of points approaching some $C_i$ that the quantity $|\varphi| + | \Delta_{\omega} \varphi|$ blows up. 
 
In this paper we describe the behavior of the $J$-flow for certain  classes $[\chi]$ on the boundary of $\mathcal{C}_{\omega}$.  First we introduce some notation:  given a closed (1,1)-form $\alpha$, write $[\alpha] \ge 0$ if there exists a smooth closed nonnegative (1,1) form cohomologous to $\alpha$.   We consider any K\"ahler class $[\chi]$ satisfying:
\begin{equation} \label{clos}
[c\chi - \omega] \ge 0.
\end{equation}
All such classes $[\chi]$ lie in the closure of $\mathcal{C}_{\omega}$.  Note that the boundary of $\mathcal{C}_{\omega}$ consists of K\"ahler classes $[\chi]$ such that $[c\chi - \omega]$ is \emph{nef}, which means that for every $\ve>0$ there exists a representative of $[c\chi-\omega]$ which is bounded below by $-\ve \omega$.
  Further, since \[
[c\chi-\omega]^2=[\omega]^2>0,
\] the class $[c \chi-\omega]$ is nef and big.  Nevertheless,  to our knowledge, this does not imply that it satisfies (\ref{clos}) - see Question \ref{q1} below.
However, at least in many cases the condition (\ref{clos}) is equivalent to $[\chi]$ belonging to the closure of $\mathcal{C}_{\omega}$ in the K\"ahler cone.  This holds for all Hirzebruch surfaces, for example, since explicit nonnegative $(1,1)$ forms can be found representing all classes on the boundary of the K\"ahler cone (see the discussion in \cite{Ca}).

Our main result is:

\pagebreak[3]
\begin{thm} [Main Theorem] \label{main} Let $X$ be a compact K\"ahler surface with K\"ahler metrics $\omega$ and $\chi$ such that 
$$[c\chi - \omega] \ge 0, \quad \textrm{where } c= \frac{2 [\chi] \cdot [\omega]}{[\chi]^2}.$$
Then there exist a finite number of curves $C_i$ on $X$ of negative self-intersection such that the solution $\varphi(t)$ of the $J$-flow (\ref{j-flow}) converges in $C^{\infty}_{\emph{loc}}(X\setminus \bigcup C_i)$ to a continuous function $\varphi_{\infty}$, smooth on $X\setminus \bigcup C_i$, satisfying
\begin{align} \label{criticaleqn}
c \chi_{\varphi_{\infty}}^2 =2\chi_{\varphi_{\infty}} \wedge \omega, \quad \textrm{for} \quad  \chi_{\varphi_{\infty}} = \chi + dd^c \varphi_{\infty} \ge 0.
\end{align}
Moreover, $\varphi_{\infty}$ is the unique continuous solution of (\ref{criticaleqn}) up to the addition of a constant.
\end{thm}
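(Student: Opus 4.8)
The plan is to treat the degenerate limiting problem by approximation from inside $\mathcal{C}_\omega$, combined with a priori estimates that persist through the degeneration away from a controlled set of curves. First I would record the algebraic structure of the boundary class. Since $[c\chi-\omega]$ is nef with $[c\chi-\omega]^2=[\omega]^2>0$, it is nef and big, and its null locus --- the union of irreducible curves $C$ with $(c\chi-\omega)\cdot C=0$ --- is a finite union of curves, each of negative self-intersection by the Hodge index theorem (a curve orthogonal to a big and nef class cannot be proportional to it, so it lies in the negative-definite part of the intersection form). These are the curves $C_i$ of the statement. It is convenient to reformulate the critical equation: setting $\omega':=c\chi_{\varphi_\infty}-\omega$, equation (\ref{criticaleqn}) is equivalent to $\omega'\ge 0$, $\omega'\wedge\chi_{\varphi_\infty}>0$ and $(\omega')^2=\omega^2$, that is, to a complex Monge--Amp\`ere equation $(\omega_0+dd^c u)^2=\omega^2$ for $u$, where $\omega_0$ is a fixed nonnegative representative of the nef and big class $[c\chi-\omega]$ furnished by (\ref{clos}).

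Next comes the analytic core, a uniform $C^0$ estimate, which I expect to be the main obstacle. I would approximate $[\chi]$ by K\"ahler classes $[\chi_s]\in\mathcal{C}_\omega$ with $[\chi_s]\to[\chi]$ as $s\to 0$; this is possible because a nef class becomes K\"ahler after adding a small multiple of a K\"ahler form, so the corresponding classes $[c_s\chi_s-\omega]$ can be arranged K\"ahler for $s>0$. By \cite{SW} each approximate $J$-flow then converges smoothly to a critical metric $\chi_{\psi_s}$. The difficulty is to bound $\|\psi_s\|_{L^\infty(X)}$ --- and likewise $\|\varphi(t)\|_{L^\infty(X)}$ along the genuine flow --- independently of $s$ and of $t$: because the reference class $[c\chi-\omega]$ is only nef, the classical maximum-principle bound degenerates at the curves $C_i$. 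Instead one must invoke pluripotential theory for the Monge--Amp\`ere formulation above. The bigness of $[c\chi-\omega]$ provides a strictly positive total mass, and a Ko{\l}odziej / Eyssidieux--Guedj--Zeriahi type $L^\infty$ estimate then yields the uniform bound together with uniform continuity of the limiting potential across the $C_i$.

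With the uniform $C^0$ bound in hand, I would establish higher order estimates on compact subsets of $X\setminus\bigcup_i C_i$. On such a set the class $[c\chi-\omega]$ is strictly positive, so the effective cohomological positivity of \cite{W1,W2} is locally available and Weinkove's second order (Laplacian) estimate applies, giving a uniform interior $C^2$ bound away from the curves; Evans--Krylov and Schauder theory promote this to uniform $C^k_{\mathrm{loc}}(X\setminus\bigcup_i C_i)$ bounds for every $k$. Letting $s\to 0$ and extracting limits produces a function $\varphi_\infty$, smooth on $X\setminus\bigcup_i C_i$, with $\chi_{\varphi_\infty}\ge 0$ solving (\ref{criticaleqn}); the uniform $C^0$ bound ensures $\varphi_\infty$ extends to a continuous function on all of $X$.

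Finally, convergence of the actual flow $\varphi(t)$ follows by combining these estimates with monotonicity: the $\mathcal{J}$-functional decreases along (\ref{j-flow}), which forces $\partial_t\varphi\to 0$ in an integral sense, and the local higher order bounds upgrade this to $C^\infty_{\mathrm{loc}}$ convergence on $X\setminus\bigcup_i C_i$ to a critical potential, necessarily $\varphi_\infty$. For uniqueness up to a constant, I would argue on the Monge--Amp\`ere formulation: if $\varphi_\infty,\tilde\varphi_\infty$ are two continuous solutions of (\ref{criticaleqn}), then $c\chi_{\varphi_\infty}-\omega$ and $c\chi_{\tilde\varphi_\infty}-\omega$ are bounded nonnegative representatives of the same big class with the same nonpluripolar Monge--Amp\`ere mass, so a comparison-principle argument (Dinew-type uniqueness for big classes) shows they coincide; hence $dd^c(\varphi_\infty-\tilde\varphi_\infty)=0$ and the difference is constant.
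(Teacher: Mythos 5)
There is a genuine gap at the step you yourself flag as the main obstacle: the uniform $C^0$ bound for the flow potential $\varphi(t)$. You propose to obtain it by invoking a Ko{\l}odziej / Eyssidieux--Guedj--Zeriahi $L^\infty$ estimate for the Monge--Amp\`ere formulation, but that estimate applies to solutions of the \emph{elliptic} equation $(\alpha+dd^c u)^2=f\omega^2$ with $\alpha+dd^cu\ge 0$ and $f$ controlled in $L^p$. Along the $J$-flow one only has $(\chi_{\varphi}-\omega)^2=\omega^2+\dot\varphi\,\chi_\varphi^2$, where $\chi_\varphi-\omega$ is in general \emph{not} a nonnegative form and $\dot\varphi$ is exactly the quantity one is trying to control; so $\varphi(t)$ is not a pluripotential solution of a degenerate Monge--Amp\`ere equation with known right-hand side, and the $L^\infty$ theory cannot be applied to it directly. (It does apply to the static solution $\psi$ of $(\alpha+dd^c\psi)^2=\omega^2$, and to your approximants $\psi_s$, but a bound on those says nothing about $\varphi(t)$ without a comparison argument.) The missing idea in the paper's proof of Proposition \ref{p1} is precisely this comparison: one first shows, via Tsuji's trick \cite{Ts} applied to Yau's approximating solutions, that $\psi$ is smooth off $\bigcup C_i$, and then runs a maximum principle on
$\theta_\ve=\varphi-(1+\ve)\psi+\ve\sum_i a_i\log|s_i|^2_{h_i}-A\ve t$,
using the rewriting $\dot\varphi=\frac{\omega^2}{\chi_\varphi^2}\bigl(\frac{(\alpha+dd^c\varphi)^2}{(\alpha+dd^c\psi)^2}-1\bigr)$ together with $\alpha-\sum_i a_iR_{h_i}\ge 0$ to force the maximum of $\theta_\ve$ to occur at $t=0$ (an alternative is the viscosity-solution comparison of \cite{EGZ2}). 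Without some such barrier/comparison argument your outline does not yield $\sup_X|\varphi(t)|\le C$, and everything downstream (the lower bound on $\mathcal{J}$, the higher order interior estimates, the convergence) collapses.

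Two smaller points. First, your approximation scheme $[\chi_s]\in\mathcal{C}_\omega$ produces limits of the \emph{critical metrics} $\chi_{\psi_s}$, which can at best give existence of a weak solution of (\ref{criticaleqn}); the theorem asserts convergence of the actual flow from the degenerate class, which is a different statement and is not reached by letting $s\to0$ in a family of auxiliary flows. Second, in the final step you need to rule out distinct subsequential limits (which, by uniqueness up to constants, could still differ by a constant); the paper pins down the limit by using that the $\mathcal{I}$-functional is constant along the flow and continuous under $C^\infty_{\mathrm{loc}}$ convergence with uniform $C^0$ bounds (Lemma \ref{lem:cty}), so every subsequential limit satisfies $\mathcal{I}(\varphi_\infty)=\mathcal{I}(\varphi_0)$ and is therefore unique. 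Your uniqueness argument via comparison principles for big classes is fine in spirit, but you still need this normalization to conclude full (not just subsequential) convergence.
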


Our result makes use of some recent works in the study of complex Monge-Amp\`ere equations that appeared after the breakthrough of  Ko{\l}odziej \cite{K1}.  Indeed the existence of a unique weak solution to the critical equation (\ref{criticaleqn}) is a direct consequence of a result of
Eyssidieux-Guedj-Zeriahi~\cite{EGZ1} and Zhang~\cite{Z}, who generalized Ko{\l}odziej's theorem to the degenerate complex Monge-Amp\`ere equation.  By comparing with this solution, we obtain our key uniform estimate for $\varphi(t)$ along the $J$-flow (Proposition \ref{p1} below).  In addition, we use the viscosity methods introduced in \cite{EGZ2} to give a second proof of our key estimate.  Moreover, the results of \cite{EGZ2} allow us to conclude that the solution of (\ref{criticaleqn}) above is continuous, and also that (\ref{criticaleqn}) can be understood in both the pluripotential and the viscosity senses.

We have an application of our result to the \emph{Mabuchi energy} \cite{M}, a functional which is closely connected to the problem of algebraic stability and existence of constant scalar curvature K\"ahler (cscK) metrics  \cite{Y2, T1, D2}.  Given a K\"ahler surface $(X, \chi)$, the Mabuchi energy is the functional $\textrm{Mab}: \mathcal{P}_{\chi} \rightarrow \mathbb{R}$ given by
$$\textrm{Mab}(\varphi) = - \int_0^1 \int_X \frac{\partial \varphi_t}{\partial t} (R_{\chi_{\varphi_t}} - \mu) \chi_{\varphi_t}^n dt,$$
where $\{ \varphi_t \}_{0 \le t \le 1}$ is a path in $\mathcal{P}_{\chi}$ between 0 and $\varphi$, $R_{\chi_{\varphi_t}}$ is the scalar curvature of the metric $\chi_{\varphi_t}$ and $\mu$ is the average of the scalar curvature of $\chi$.  The value $\textrm{Mab}(\varphi)$ is independent of the choice of path.

It was conjectured by Tian \cite{T1}, assuming $X$ has no nontrivial holomorphic vector fields, that the existence of a cscK metric is equivalent to the \emph{properness} of the Mabuchi energy, meaning that there exists an increasing function $f: [0,\infty) \rightarrow \mathbb{R}$ with $\lim_{x \rightarrow \infty} f(x)=\infty$ such that
$$\textrm{Mab}(\varphi) \ge f(E(\varphi)), \quad \textrm{where } \quad E(\varphi) = \int_X \sqrt{-1} \partial \varphi \wedge \overline{\partial}\varphi \wedge (\chi_0 + \chi_{\varphi}).$$
This conjecture holds whenever $[\chi] = -c_1(X)>0$ or if  $[\chi]=c_1(X)>0$ and $X$ has no nontrivial holomorphic vector fields \cite{T1,T2, TZ}.  It also holds on all manifolds with $c_1(X)=0$ even in the presence of holomorphic vector fields \cite{T2}.  In fact in each case, the function $f$ can be taken to be linear \cite{T2, PSSW}.  Chen \cite{Ch1}  showed that on manifolds with $c_1(X)<0$, or equivalently, with ample canonical bundle $K_X$, the Mabuchi energy can be written as a sum of two terms:  the first is the $\mathcal{J}$-functional with reference metric $\omega$  in $[K_X]$ and the second is a term which is bounded below.  In fact the second term is proper \cite{T2} (see the discussion in \cite{SW}), and under the cohomological condition $[c\chi - \omega] \ge 0$, the $\mathcal{J}$ functional has a lower bound, as shown in Corollary \ref{c1} below.  Hence we obtain: 

\begin{cor} \label{t2}Suppose that $X$ is a compact K\"{a}hler surface with ample canonical bundle $K_X$. Then the Mabuchi energy is proper on the classes $[\chi]$ satisfying:
\begin{equation} \label{cone}
\left( \frac{2[\chi]\cdot[K_X]}{[\chi]^2} \right)[\chi]-[K_X]\geq 0.
\end{equation}
Moreover, the function $f$ in the definition of properness can be taken to be linear.
\end{cor}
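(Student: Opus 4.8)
The plan is to deduce the corollary from Chen's decomposition of the Mabuchi energy together with the lower bound for the $\mathcal{J}$-functional that the boundary-case analysis of the $J$-flow provides. Since $K_X$ is ample, $[K_X]=-c_1(X)$ is a K\"ahler class; I would fix a K\"ahler metric $\omega\in[K_X]$ and set $c=2[\chi]\cdot[K_X]/[\chi]^2=2[\chi]\cdot[\omega]/[\chi]^2$, so that the hypothesis (\ref{cone}) becomes exactly $[c\chi-\omega]\ge 0$. This is the condition (\ref{clos}) under which Theorem \ref{main} and its consequences apply, so all the machinery of the preceding sections is available.

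First I would recall Chen's formula \cite{Ch1}: for a K\"ahler surface with $c_1(X)<0$, $\mathrm{Mab}(\varphi)=\mathcal{J}(\varphi)+\mathcal{I}(\varphi)$, where $\mathcal{J}$ is the $\mathcal{J}$-functional with reference metric $\omega\in[K_X]$ (the functional whose gradient flow is (\ref{j-flow})) and $\mathcal{I}$ is the remaining term. By Tian \cite{T2} (see the discussion in \cite{SW}) the term $\mathcal{I}$ is proper, and in fact there exist $\eps>0$ and $C_1$ with $\mathcal{I}(\varphi)\ge\eps\,E(\varphi)-C_1$; this is where the linear profile of $f$ originates. It remains only to bound $\mathcal{J}$ from below, and this is Corollary \ref{c1}: under $[c\chi-\omega]\ge 0$ one has $\mathcal{J}(\varphi)\ge -C_2$. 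Combining, $\mathrm{Mab}(\varphi)\ge\eps\,E(\varphi)-(C_1+C_2)$, so the Mabuchi energy is proper with the linear function $f(x)=\eps x-(C_1+C_2)$.

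The substantive step — and the only genuine obstacle — is the lower bound on $\mathcal{J}$, which is precisely where the present paper's work enters. The mechanism I would use is that $\mathcal{J}$ decreases along the $J$-flow and is invariant under adding constants, the latter because the first variation in the constant direction is $\int_X(2\chi_\varphi\wedge\omega-c\chi_\varphi^2)=2[\chi]\cdot[\omega]-c[\chi]^2=0$ by the choice of $c$. Hence for any $\varphi\in\mathcal{P}_\chi$, running the flow from $\varphi$ gives $\mathcal{J}(\varphi)\ge\lim_{t\to\infty}\mathcal{J}(\varphi(t))$, and by Theorem \ref{main} this flow converges to a shift of the essentially unique solution $\varphi_{\infty}$ of (\ref{criticaleqn}); shift-invariance then identifies the limit value as $\mathcal{J}(\varphi_{\infty})=:-C_2$, a constant independent of $\varphi$. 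In the interior regime $[\chi]\in\mathcal{C}_{\omega}$ this bound is immediate, since a smooth critical metric realizes the minimum of $\mathcal{J}$; on the boundary, however, the critical metric is singular, so the argument must pass through the degenerate limit. The delicate point is to ensure that $\mathcal{J}$ is continuous along the $C^0$- and $C^{\infty}_{\mathrm{loc}}$-convergence furnished by Theorem \ref{main}, so that no contribution is lost across the curves $C_i$; this is exactly what the uniform potential estimate of Proposition \ref{p1}, together with the continuity of $\varphi_{\infty}$, are needed to guarantee.
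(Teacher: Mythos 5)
Your proposal is correct and takes essentially the same route as the paper, whose proof is the one-line instruction to combine Corollary \ref{c1} with Chen's decomposition of the Mabuchi energy as recorded in Lemma 4.1 of \cite{SW}; you have simply unpacked that combination, including the provenance of the linear properness of the non-$\mathcal{J}$ term and the mechanism behind Corollary \ref{c1}. One minor caution: the second term in Chen's decomposition is an entropy-type functional, not the functional this paper denotes $\mathcal{I}$ in Section \ref{sect:thm}, so your reuse of the symbol $\mathcal{I}$ for it clashes with the paper's notation, though this does not affect the substance of the argument.
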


Thus, since the condition of $K_X$ being ample implies that $X$ has no nontrivial holomorphic vector fields,  conjecturally, classes $[\chi]$ in the cone given by (\ref{cone}) should admit cscK metrics.  Note that the class $[K_X]$ is inside this cone and admits a cscK metric \cite{A, Y}.  The same is true for classes sufficiently close to $[K_X]$ (see \cite{LS}).  On the other hand, Ross found K\"ahler classes on surfaces with $K_X$ ample that do not admit cscK metrics \cite{R}.  Corollary \ref{t2}, together with the arguments of \cite{LS}, suggests that the set of classes that admit cscK metrics is strictly larger than those lying in the cone (\ref{cone}).

An outline of the paper is as follows.   In Section \ref{sect:main}, we prove the key $C^0$ estimate.  We provide two proofs:  the first uses smooth maximum principle arguments and the second uses the notion of viscosity solutions from \cite{EGZ2}.  We complete the proof of the main theorem in Section \ref{sect:thm}, and in the last section we finish with some questions for further study.

We thank Valentino Tosatti for some helpful suggestions and discussions.  We also thank  the referees for their comments which helped to improve the exposition, and for pointing out some inaccuracies in a previous version of this paper.

\section{The $C^0$ Estimate} \label{sect:main}

For convenience of notation we assume from now on that $c=1$.  We may do this by considering $\frac{1}{c}[\chi]$ instead of $[\chi]$. In addition, we may assume, by modifying the initial data if necessary, that $\chi - \omega \ge 0$.

The key estimate we need is a uniform $C^0$ estimate for the solution $\varphi(t)$ of the $J$-flow.   We need the following theorem on the degenerate complex Monge-Amp\`ere equation  \cite{EGZ1, EGZ2} (the $C^0$ estimate was proved independently in \cite{Z} under slightly less general hypotheses). 

\begin{thm} \label{thm:EGZ1}
Let $(M, \omega)$ be a compact K\"ahler manifold of complex dimension $n$ and let $\alpha$ be a semi-positive $(1,1)$ form with $\int_M \alpha^n>0$.  For any nonnegative $f \in L^p (M, \omega^n)$, for $p>1$, with $\int_M f \omega^n = \int_M \alpha^n$ there exists a unique continuous function $\varphi$ on $M$ with $\alpha + dd^c \varphi \ge 0$ and
\begin{equation}
(\alpha+ dd^c \varphi)^n = f \omega^n, \quad \sup_M \varphi =0.
\end{equation}
Moreover, $\| \varphi\|_{C^0(M)}$ is uniformly bounded by a constant depending only on $p, M, \omega, \alpha$ and $\| f\|_{L^p(M)}$.
\end{thm}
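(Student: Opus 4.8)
The plan is to reduce to Kołodziej's non-degenerate estimate by regularizing the semi-positive form and then to pass to the limit using pluripotential theory. First I would introduce the Kähler approximations $\alpha_\eps = \alpha + \eps \omega$ for $\eps > 0$, together with normalized densities $f_\eps = a_\eps f$, where $a_\eps = \int_M \alpha_\eps^n / \int_M f \omega^n \to 1$ is chosen so that the mass balance $\int_M f_\eps \omega^n = \int_M \alpha_\eps^n$ holds. Since $\int_M \alpha_\eps^n \ge \int_M \alpha^n > 0$, the constant $a_\eps$ stays bounded. As $\alpha_\eps$ is a genuine Kähler form and $f_\eps \in L^p$ with $p > 1$, Kołodziej's theorem \cite{K1} produces a unique continuous $\varphi_\eps$ with $\alpha_\eps + dd^c \varphi_\eps \ge 0$, $\sup_M \varphi_\eps = 0$, solving $(\alpha_\eps + dd^c \varphi_\eps)^n = f_\eps \omega^n$.

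The crucial step is a uniform bound $\| \varphi_\eps \|_{C^0} \le C$ with $C$ independent of $\eps$. I would obtain this from the capacity version of Kołodziej's estimate: the sublevel sets $\{ \varphi_\eps < -s \}$ are controlled through the Monge-Amp\`ere capacity via an inequality of the form $\mu_\eps(\{\varphi_\eps < -s\}) \le F(\mathrm{Cap}(\{\varphi_\eps < -s\}))$ and an iteration lemma, and the resulting bound depends only on $p$, on $\| f_\eps \|_{L^p} = a_\eps \| f \|_{L^p}$, and on the total mass $V := \int_M \alpha^n$ staying bounded away from zero. Here the hypothesis $\int_M \alpha^n > 0$ is precisely what keeps the estimate from degenerating as $\eps \to 0$; uniformity in $\eps$ also uses that the capacities with respect to $\alpha_\eps$ are comparable to those with respect to $\omega$ uniformly, since $\alpha_\eps \le \alpha + \omega \le C \omega$.

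Next I would extract a limit. The $\varphi_\eps$ are uniformly bounded and, for $\eps \le \eps_0$, all $(\alpha + \eps_0 \omega)$-plurisubharmonic, hence relatively compact in $L^1$; a standard compactness argument together with a diagonal process and upper-semicontinuous regularization yields a bounded $\alpha$-plurisubharmonic $\varphi$ with $\sup_M \varphi = 0$. By the Bedford-Taylor continuity of the complex Monge-Amp\`ere operator along uniformly bounded monotone (or uniformly convergent) sequences, $(\alpha_\eps + dd^c \varphi_\eps)^n \to (\alpha + dd^c \varphi)^n$ weakly, while $f_\eps \omega^n \to f \omega^n$, so $\varphi$ solves $(\alpha + dd^c \varphi)^n = f \omega^n$ in the pluripotential sense. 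For continuity of $\varphi$ I would invoke Kołodziej's stability estimate, which controls $\| \varphi_\eps - \varphi_{\eps'} \|_{C^0}$ by the $L^p$-distance of the densities together with the $C^0$-distance of the reference forms; this upgrades $\varphi_\eps \to \varphi$ to uniform convergence, forcing $\varphi$ to be continuous.

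Finally, uniqueness under the normalization $\sup_M \varphi = 0$ would follow from the comparison principle of pluripotential theory: if $\varphi, \psi$ are two bounded solutions, the domination principle applied to $\max(\varphi, \psi + \delta)$ shows that two bounded $\alpha$-plurisubharmonic functions with the same Monge-Amp\`ere measure differ by a constant, which the normalization then fixes. I expect the main obstacle to be the uniform capacity estimate underlying the $C^0$ bound --- establishing the capacity comparison with constants independent of $\eps$ is the technical heart of the argument --- with the continuity of the limit via the stability estimate being the second delicate point.
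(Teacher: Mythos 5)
This statement is not proved in the paper at all: it is quoted as a known result, attributed to Eyssidieux--Guedj--Zeriahi \cite{EGZ1, EGZ2} and Zhang \cite{Z}, so there is no internal proof to compare against. Your outline --- regularize $\alpha_\eps=\alpha+\eps\omega$, solve by Ko{\l}odziej/Yau, prove a $C^0$ bound uniform in $\eps$ via capacity estimates whose constants depend only on $p$, $\|f\|_{L^p}$ and the positive total mass $\int_M\alpha^n$, pass to the limit, and get continuity from stability and uniqueness from the comparison/domination principle --- is exactly the strategy carried out in those references, and the two points you single out as delicate (uniformity of the capacity estimate as $\eps\to 0$, and the stability estimate in the degenerate setting) are indeed where the real work in \cite{EGZ1} and \cite{Z} lies.
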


Given this, we immediately obtain a solution $\varphi_{\infty}$ to (\ref{criticaleqn}), using the observation of Chen \cite{Ch1} that the critical equation can be rewritten as a complex Monge-Amp\`ere equation:
\begin{align} \label{dma}
\chi_{\varphi}^2=2\chi_{\varphi}\wedge \omega \quad \Longleftrightarrow \quad (\chi_{\varphi}-\omega)^2=\omega^2.
\end{align}
Writing $\alpha : = \chi - \omega \ge 0$ on the K\"ahler surface $X$, we can apply Theorem \ref{thm:EGZ1} to see that there exists a  continuous function $\varphi_{\infty}$ solving (\ref{criticaleqn}).  Moreover, $\varphi_{\infty}$ is unique up to the addition of a constant.

Next we use the uniform $C^0$ bound from Theorem \ref{thm:EGZ1} to obtain:

\begin{prop} \label{p1} We assume that $\chi - \omega \ge 0$ as discussed above. Let $\varphi(t)$ be the solution of $J$-flow (\ref{j-flow}) on the compact K\"ahler surface $X$.   Then there exists $C$ depending only on the initial data such that for all $t\geq0$,
\begin{align}
\| \varphi (t) \|_{C^0(X)} \le C.
\end{align}
\end{prop}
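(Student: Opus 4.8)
The plan is to bound $\varphi(t)$ along the flow by comparing it with the static solution $\varphi_\infty$ of the critical equation (\ref{criticaleqn}), whose existence and $C^0$ bound are guaranteed by Theorem \ref{thm:EGZ1}. The natural quantity to examine is the difference $\psi := \varphi - \varphi_\infty$; since $\varphi_\infty$ is fixed and bounded in $C^0$, it suffices to bound $\psi$. I would organize the argument around the maximum principle applied to $\psi$ (and to $\psi + \epsilon t$ or similar auxiliary quantities to absorb the time-derivative term), tracking how the flow equation (\ref{j-flow}) compares to the stationary equation.

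First I would rewrite the flow in a form suited to the maximum principle. With $c = 1$ as normalized, the flow reads $\partial_t \varphi = 1 - 2 \chi_\varphi \wedge \omega / \chi_\varphi^2$, while $\varphi_\infty$ satisfies $\chi_{\varphi_\infty}^2 = 2 \chi_{\varphi_\infty} \wedge \omega$, i.e. the right-hand side vanishes when $\varphi = \varphi_\infty$. The key structural fact, which I would exploit, is that the operator $\varphi \mapsto 2 \chi_\varphi \wedge \omega / \chi_\varphi^2$ is monotone in the appropriate sense: at an interior spatial maximum of $\psi = \varphi - \varphi_\infty$ at a fixed time, we have $\chi_\varphi \le \chi_{\varphi_\infty}$ in the sense that $dd^c \psi \le 0$ there, and I would compare the two ratios to extract a sign for $\partial_t \psi$. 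Concretely, at a maximum point of $\psi$ in space one expects $2 \chi_\varphi \wedge \omega / \chi_\varphi^2 \ge 2 \chi_{\varphi_\infty} \wedge \omega / \chi_{\varphi_\infty}^2 = 1$, forcing $\partial_t \varphi \le 0$ at that point; running the parabolic maximum principle then yields an upper bound $\sup_X \psi(t) \le \sup_X \psi(0)$, and symmetrically a lower bound at the spatial minimum, giving $\| \psi(t) \|_{C^0} \le \| \psi(0) \|_{C^0}$. Combined with the uniform bound on $\varphi_\infty$ from Theorem \ref{thm:EGZ1}, this closes the estimate with $C$ depending only on the initial data.

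The delicate point, and the one I expect to be the main obstacle, is that $\varphi_\infty$ is only \emph{continuous}, not smooth, and $\chi_{\varphi_\infty}$ is merely nonnegative rather than strictly positive, so the naive pointwise comparison at a maximum of $\psi$ cannot be applied directly: the quantity $2 \chi_{\varphi_\infty} \wedge \omega / \chi_{\varphi_\infty}^2$ is not classically defined where $\chi_{\varphi_\infty}$ degenerates, and the maximum principle requires sufficient regularity. To handle this I would not work with $\varphi_\infty$ pointwise but instead exploit the fact that the flow has a \emph{monotone} functional structure: along the $J$-flow the $\mathcal J$-functional decreases, and I would try to convert the cohomological hypothesis $[c\chi - \omega] \ge 0$ into a uniform lower bound that controls $\sup_X \varphi$ from above and $\inf_X \varphi$ from below via integral (rather than pointwise) comparison. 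Alternatively — and this is presumably what the paper's two-proof structure suggests — I would regularize by replacing the semipositive class with $\alpha + \epsilon \omega$, solving the nondegenerate equation to get smooth $\varphi_\epsilon^\infty$ with uniform $C^0$ bounds from Theorem \ref{thm:EGZ1}, carrying out the clean maximum-principle comparison against $\varphi_\epsilon^\infty$, and then letting $\epsilon \to 0$; the viscosity framework of \cite{EGZ2} provides exactly the tool to make the limiting comparison rigorous, which is why the authors invoke it for their second proof. The heart of the matter is thus transferring a comparison argument from the smooth nondegenerate setting, where the maximum principle is immediate, to the genuinely degenerate boundary case, and I expect the uniform control of the regularized solutions via Theorem \ref{thm:EGZ1} to be the essential ingredient that makes this passage work.
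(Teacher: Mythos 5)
Your proposal is correct in its core mechanism --- compare $\varphi(t)$ at a spatial extremum with a solution of the critical Monge--Amp\`ere equation, use the monotonicity of $\chi_{\varphi}\mapsto 2\chi_{\varphi}\wedge\omega/\chi_{\varphi}^2$ under $\chi_{\varphi}\le\tilde\chi$ to get a sign on $\partial_t\varphi$ there, and absorb errors with an $\ve t$ term --- and your diagnosis of the obstruction (the comparison function is only continuous, with merely semipositive associated form) is exactly the right one. Where you differ from the paper is in how the degeneracy is handled. The paper's first proof compares with the \emph{degenerate} solution $\psi$ of $(\alpha+dd^c\psi)^2=\omega^2$ itself: it first shows via Tsuji's trick that $\psi$ is smooth away from the curves $C_i$, and then applies the maximum principle to $\theta_{\ve}=\varphi-(1+\ve)\psi+\ve\sum_i a_i\log|s_i|^2_{h_i}-A\ve t$, whose logarithmic barrier forces the maximum into the smooth locus; the second proof dispenses with all of this by invoking the viscosity sub/supersolution properties of $\psi$ from \cite{EGZ2}. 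Your main route --- compare directly with the smooth Yau solution $\psi_{\ve}$ of $(\alpha+\ve\omega+dd^c\psi_{\ve})^2=c_{\ve}\omega^2$ and let $\ve\to 0$ --- is a third, viable variant that avoids both the interior-smoothness argument and the viscosity machinery, provided you make two points explicit: (i) the $C^0$ bound on $\psi_{\ve}$ must be \emph{uniform in} $\ve$ (this follows from Ko{\l}odziej's estimate exactly as in the paper's Tsuji-trick step, but Theorem \ref{thm:EGZ1} as literally stated concerns a fixed semipositive form); and (ii) the normalizing constant $c_{\ve}=[\alpha+\ve\omega]^2/[\omega]^2>1$ means $\psi_{\ve}$ is not an exact stationary comparison, so at a spatial maximum of $\varphi-\psi_{\ve}$ one only gets $\partial_t\varphi\le c_{\ve}-1=O(\ve)$, an error that must be absorbed by a $-C\ve t$ term and then removed by sending $\ve\to 0$ at each fixed time. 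Two smaller corrections: the viscosity framework is not what makes the limit $\ve\to 0$ rigorous in the regularized argument (that limit is elementary once the bounds are uniform in $\ve$; viscosity is an \emph{alternative} to regularization, used by the paper on the degenerate $\psi$ directly, not a supplement to it), and your fallback idea of extracting the $C^0$ bound from the monotonicity of the $\mathcal{J}$-functional runs the logic backwards --- in the paper the lower bound for $\mathcal{J}$ is a \emph{consequence} of the $C^0$ estimate, and a bound on $\mathcal{J}$ alone does not control $\sup_X|\varphi|$.
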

\begin{proof} From the introduction, we know
\begin{equation}\label{Cihere}
[\chi-\omega]- \sum_{i=1}^N a_i [C_i] >0,
\end{equation}
 for positive real numbers $a_i$ and irreducible curves $C_i$ of negative self-intersection.  Moreover, since we are assuming $[\chi - \omega] \ge 0$, we may take the constants $a_i$ to be arbitrarily small.   However, we will not need to make use of this last fact.

It follows that there exist Hermitian metrics $h_i$ on the line bundles $[C_i]$ associated to $C_i$ such that
\begin{align} \label{alpham}
\chi-\omega - \sum_{i=1}^N a_i R_{h_i} >0,
\end{align}
where $R_{h_i} = -dd^{c} \log h_i$ is the curvature of $h_i$. Let $s_i$ be a holomorphic section of $[C_i]$ vanishing along $C_i$ to order 1.
Recall that we denote $\chi-\omega$ by $\alpha$.

Next, we apply Theorem \ref{thm:EGZ1} and write $\psi$ for the solution to the degenerate complex Monge-Amp\`ere equation 
\begin{equation} \label{dma2}
(\alpha + dd^c \psi)^2 = \omega^2, \quad \alpha + dd^c \psi \ge 0,
\end{equation}
 subject to the condition $\sup_X \psi =0$.
We have $\| \psi \|_{C^0(X)} \le C$.  

Moreover,  $\psi$ is smooth away from the curves $C_i$.  Indeed this last statement follows from a trick of Tsuji \cite{Ts}, as used in \cite{EGZ1}.  Although the proof is the same, the precise statement we need does not seem to be quite contained in \cite{EGZ1}, and so  we briefly outline the idea here for the convenience of the reader.  For $\delta>0$, let $\psi_{\delta}$ be Yau's solution of the complex Monge-Amp\`ere equation
\begin{equation} \label{psidelta}
(\alpha + \delta \omega + dd^c \psi_{\delta})^2 = c_{\delta} \omega^2, \quad \alpha_{\delta}:= \alpha + \delta \omega+dd^c \psi_{\delta} >0,
\end{equation}
for a constant $c_{\delta}$ chosen so that the integrals of both sides are equal.  From Theorem \ref{thm:EGZ1}, $\psi_{\delta}$ is uniformly bounded in $C^0$.  To obtain a second order estimate for $\psi_{\delta}$, uniform in $\delta$, we consider, for a constant $A>0$,
\begin{equation}
Q_{\delta} =\log \textrm{tr}_{\omega}\alpha_{\delta} - A (\psi_{\delta} - \sum_i a_i \log |s_i|^2_{h_i}),
\end{equation}
which is well-defined on $X \setminus \bigcup C_i$ and tends to $-\infty$ on $\bigcup C_i$.   Compute at a point in $X \setminus \bigcup C_i$,
$$\Delta_{\alpha_{\delta}} Q_{\delta} \ge - C \textrm{tr}_{\alpha_{\delta}} \, \omega - 2A + A \textrm{tr}_{\alpha_{\delta}} (\alpha - \sum_i a_i R_{h_i}).$$
Then using (\ref{alpham}) we may choose a uniform $A$ sufficiently large so that
$$A(\alpha - \sum_i a_i R_{h_i}) \ge (C+1)\omega.$$
The quantity $Q_{\delta}$ achieves a maximum at some point $x \in X \setminus \bigcup C_i$, and at this point we have $\Delta_{\alpha_{\delta}}Q_{\delta} \le 0.$  Hence, at  $x$,
$$0 \ge \textrm{tr}_{\alpha_{\delta}} \, {\omega} -2A,$$
and so  $\textrm{tr}_{\alpha_{\delta}} \, \omega$ is uniformly bounded from above.  But from (\ref{psidelta}) we have at $x$,
$$\textrm{tr}_{\omega} \, {\alpha_{\delta}}  = \left( \frac{\alpha_{\delta}^2}{\omega^2} \right) \textrm{tr}_{\alpha_{\delta}} \, \omega = c_{\delta} \textrm{tr}_{\alpha_{\delta}} \, \omega \le C',$$
for some uniform $C'$.  Since $\psi_{\delta}$ is uniformly bounded in $C^0$ we see that $Q_{\delta}$ is uniformly bounded from above at $x$ and hence everywhere.

This establishes a uniform upper bound for $\textrm{tr}_{\omega} \, \alpha_{\delta}$ (and again by (\ref{psidelta}), also for $\textrm{tr}_{\alpha_{\delta}}\, {\omega}$) on any compact subset of $X \setminus \bigcup C_i$.  It follows that on such fixed compact set, $\omega$ and $\alpha_{\delta}$ are uniformly equivalent.  Hence
 we have 
 estimates, uniform in $\delta$, for $dd^c\psi_{\delta}$ on compact subsets of $X \setminus \bigcup C_i$.  The 
 $C_{\textrm{loc}}^{\infty}(X \setminus \bigcup C_i)$ estimates for $\psi_{\delta}$ then follow from the usual Evans-Krylov \cite{E,Kr} local theory for the complex Monge-Amp\`ere equation.  Taking a limit as $\delta \rightarrow 0$ shows that $\psi$ is smooth away from the $C_i$.

Fix $\ve \in (0,1)$.  We will apply the maximum principle to the quantity 
\begin{align*}
\theta_{\varepsilon} = \varphi - (1+\varepsilon)\psi  + \varepsilon \sum_{i=1}^N a_i \log |s_i|^2_{h_i} - A \varepsilon t,
\end{align*}
where $A$ is a constant to be determined.  Observe that $\theta_{\varepsilon}$ is smooth on $X \setminus \bigcup C_i$ and
tends to negative infinity along $\bigcup C_i$ and hence for each time $t$, $\theta_{\varepsilon}$ achieves a maximum in the interior of $X \setminus \bigcup C_i$.

We rewrite (\ref{j-flow}) as follows
\begin{align} \label{jf2}
\frac{\partial \varphi}{\partial t}&=1-\frac{2\chi_{\varphi}\wedge\omega}{\chi_{\varphi}^2}=\frac{\chi_{\varphi}^2-2\chi_{\varphi}\wedge \omega}{\chi_{\varphi}^2}\\ 
\notag &=\frac{(\chi_{\varphi}-\omega)^2-\omega^2}{\chi_{\varphi}^2}=\frac{\omega^2}{\chi_{\varphi}^2} \left(\frac{(\chi_{\varphi}-\omega)^2}{\omega^2}-1\right)=\frac{\omega^2}{\chi_{\varphi}^2}\left( \frac{\alpha_{\varphi}^2}{\alpha_{\psi}^2}-1\right).
\end{align}
Compute on $X \setminus \bigcup C_i$, using (\ref{jf2}),
\begin{align} \nonumber
\frac{\partial}{\partial t} \theta_{\varepsilon}  & = \frac{\omega^2}{\chi_{\varphi}^2} \left( \frac{(\alpha + dd^{c} \varphi)^2}{(\alpha + dd^{c} \psi)^2} -1 \right) - A \varepsilon \\ \nonumber
& = \frac{\omega^2}{\chi_{\varphi}^2} \left( \frac{ \left( (1+ \varepsilon) \alpha + (1+ \varepsilon)dd^{c} \psi - \varepsilon (\alpha - \sum a_i R_{h_i}) + dd^{c} \theta_{\varepsilon} \right)^2}{(\alpha+dd^{c} \psi)^2} -1 \right) - A\varepsilon.
\end{align}
But $\alpha - \sum a_i R_{h_i} \ge 0$, and at the maximum of $\theta_{\varepsilon}$, we have $dd^{c}\theta_{\varepsilon} \le 0$.  Hence at the maximum of $\theta_{\ve}$,
\begin{align} \label{con}
\frac{\partial}{\partial t} \theta_{\varepsilon}  
& \le  \frac{\omega^2}{\chi_{\varphi}^2} \left((1+\varepsilon)^2 \frac{  \left( \alpha + dd^{c} \psi   \right)^2}{(\alpha+dd^{c} \psi)^2} -1 \right) - A\varepsilon  <0,
\end{align}
if we choose
\[
A = \sup_{X \times [0,\infty)} \frac{3\omega^2}{\chi_{\varphi}^2},
\]
which is a uniform constant since $\chi_{\varphi}$ is always uniformly bounded from below away from zero along the $J$-flow.  Indeed this last fact follows immediately from taking a time derivative of the $J$-flow equation and applying the maximum principle (see Lemma 4.1 in \cite{Ch2}).  Then (\ref{con}) implies that $\theta_{\varepsilon}$ must achieve its maximum at time zero, and hence
$\theta_{\varepsilon}$ is uniformly bounded from above by a constant independent of $\varepsilon$.  Letting $\varepsilon \rightarrow 0$ we obtain the upper bound for $\varphi$.

The lower bound of $\varphi$ is similar:  just replace $\varepsilon$ by $-\varepsilon$ and consider the minimum instead of the maximum.
\end{proof}

We provide a second proof. The proof is based on the equivalence of two notions of weak solution of (\ref{dma}): pluripotential sense and viscosity sense.

\begin{proof} [Second proof of Proposition~\ref{p1}]
As in the first proof, write $\psi$ for the solution to  (\ref{dma2}) with $\sup_X \psi =0$.
The function $\psi$ is continuous on $X$ and is smooth away from the curves $C_i$.
We now apply Theorem 3.6 of \cite{EGZ2}, which states that $\psi$ satisfies the equation  (\ref{dma2}) in the viscosity sense as defined in that paper.

We refer to \cite{EGZ2} for the precise definition of a viscosity solution to (\ref{dma2}).  Instead we state two consequences of this definition which are sufficient for our purposes:\begin{enumerate}
\item[(i)] If $x_0$ is any point on $X$ and $q$ is any smooth function defined in a neighborhood of $x_0$ such that 
$$\psi - q \textrm{ has a local maximum at } x_0$$
then $(\alpha+ dd^c q)^2 \ge \omega^2$ at $x_0$.
\item[(ii)] If $x_0$ is any point on $X$ and $q$ is any smooth function defined in a neighborhood of $x_0$ such that 
$$\psi - q \textrm{ has a local minimum at } x_0$$
then $(\alpha+ dd^c q)^2 \le \omega^2$ at $x_0$.
\end{enumerate}
Indeed (i) follows from the definition of viscosity subsolution and (ii) from the definition of viscosity supersolution (see Section 2 in \cite{EGZ2}).

We first find an upper bound for $\varphi$.  Let $\ve>0$ and define $H_{\ve}=\varphi-\psi - \ve t$.  We wish to show that $H_{\ve}$ attains its maximum value at $t=0$.
 Note that $H_{\ve}$
 satisfies the equation:
\begin{align*}
\frac{\partial H_{\ve}}{\partial t}=1-\frac{2\chi_{\varphi}\wedge\omega}{\chi_{\varphi}^2} - \ve.
\end{align*}
Suppose that $H_{\ve}$ attains a maximum at a point $(x_0, t_0)$ on  $X \times [0,T]$ for some finite $T>0$, and assume for a contradiction that $t_0>0$.  Then $\partial H_{\ve}/{\partial t}\, (x_0,t_0) \ge 0$.  Define a smooth function $q$ on $X$ by $q(x) = \varphi(x,t_0) - H_{\ve}(x_0, t_0) - \ve t_0$.  Note that  the function
$$x\mapsto (\psi -q)(x) = - H_{\ve}(x, t_0) + H_{\ve} (x_0, t_0) $$
achieves its minimum at $x_0$.  Then we can apply (ii) to see that $(\alpha+ dd^c q)^2 \le \omega^2$ at $x_0$, or in other words
\[
(\chi-\omega+ dd^{c} \varphi)^2\leq \omega^2, \quad \text{at} \quad (x_0, t_0),
\]
 which is equivalent to
\[
\chi_{\varphi}^2\leq 2\chi_{\varphi}\wedge \omega \quad \text{at} \quad (x_0, t_0).
\]
It follows that
\[
\frac{\partial H_{\ve}}{\partial t} (x_0, t_0)=1-\frac{2\chi_{\varphi}\wedge \omega}{\chi_{\varphi}^2} - \ve  <0,
\]
contradicting the fact that $\partial H_{\ve}/{\partial t}\, (x_0,t_0) \ge 0$.  Hence $H_{\ve}$ attains its maximum value at $t=0$ and  is uniformly bounded from above independent of $\ve$.  Letting $\ve \rightarrow 0$ gives the desired upper bound for $\varphi$.

Applying a similar argument, using (i) instead of (ii), gives a uniform lower bound for $\varphi$.\end{proof}

We can now apply Theorem 1.3 of \cite{SW} together with the standard local theory for (\ref{j-flow}) to obtain higher order estimates.

\begin{prop} As above, assume that $\chi - \omega \ge 0$ on the compact K\"ahler surface $X$ and let $\varphi(t)$ be the solution of $J$-flow (\ref{j-flow}).  For any compact subset $K\subset X\setminus \bigcup C_i$ and any $k\geq 0$, there exists a constant $C_{k,K}$ such that for all $t$,
\begin{align*}
||\varphi(t)||_{C^k(K)}\leq C_{k,K}.
\end{align*}
Here, the $C_i$ are the irreducible curves of negative self-intersection chosen to satisfy (\ref{Cihere}).
\end{prop}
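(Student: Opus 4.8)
The plan is to promote the uniform $C^0$ bound of Proposition~\ref{p1} to uniform local bounds on all derivatives away from the curves $C_i$, in two stages: first a uniform \emph{local second-order estimate} giving the equivalence of $\chi_{\varphi}$ and $\omega$ on $K$, and then a bootstrap through the standard local parabolic theory. At the outset I would assemble the two ingredients already available. Proposition~\ref{p1} gives $\|\varphi(t)\|_{C^0(X)}\le C$ uniformly in $t$, and, as recalled in the first proof of that proposition, Lemma~4.1 of \cite{Ch2} shows that $\chi_{\varphi}$ is uniformly bounded below away from zero along the flow; in particular, since $\partial_t\varphi = c - \textrm{tr}_{\chi_{\varphi}}\omega$ and $X$ is a surface, this lower bound also makes $|\partial_t\varphi|$ uniformly bounded. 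Consequently, to control $\chi_{\varphi}$ on a compact $K\subset X\setminus\bigcup C_i$ it suffices to bound $\textrm{tr}_{\omega}\chi_{\varphi}$ from above on $K$, uniformly in $t$.

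The main step is this local second-order estimate, which is the content of Theorem~1.3 of \cite{SW}. The mechanism parallels the first proof of Proposition~\ref{p1}: one applies the parabolic maximum principle to a quantity of the form
\[
Q = \log \textrm{tr}_{\omega}\chi_{\varphi} - A\left(\varphi - \sum_{i=1}^N a_i \log |s_i|^2_{h_i}\right),
\]
with $A$ a large uniform constant. The singular weight $\sum_i a_i\log|s_i|^2_{h_i}$ tends to $-\infty$ along $\bigcup C_i$, so $Q$ attains its space-time maximum in $X\setminus\bigcup C_i$. At such a maximum the differentiated flow produces a bad term $-C\,\textrm{tr}_{\chi_{\varphi}}\omega$ coming from the curvature of $\omega$; here the strict positivity (\ref{alpham}) of $\chi-\omega-\sum_i a_i R_{h_i}$ is used, exactly as in the first proof, to arrange $A(\chi-\sum_i a_i R_{h_i})\ge (C+1)\omega$ and absorb it, while the $C^0$ bound controls the $A\varphi$ term and the bound on $\partial_t\varphi$ controls the parabolic time derivative. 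This bounds $Q$ from above, and since both the weight and the $C^0$ term are bounded on $K$, it yields the desired uniform upper bound for $\textrm{tr}_{\omega}\chi_{\varphi}$ over $K$.

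Combining this upper bound with the global lower bound on $\chi_{\varphi}$ shows that $\chi_{\varphi}$ and $\omega$ are uniformly equivalent on $K$, so the flow $\partial_t\varphi = c - \textrm{tr}_{\chi_{\varphi}}\omega$ is uniformly parabolic there. Because the operator $-\textrm{tr}_{\chi_{\varphi}}\omega$ is a concave function of $dd^c\varphi$, the local parabolic Evans--Krylov theory \cite{E,Kr}, applied on a slightly smaller compact set (and combined with the $C^0$ and $C^2$ bounds already obtained), gives interior $C^{2,\alpha}$ estimates, and standard parabolic Schauder bootstrapping then produces the uniform $C^k(K)$ bounds for every $k$.

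The crux of the whole argument is the local second-order estimate. Since $\chi_{\varphi}$ genuinely degenerates along $\bigcup C_i$ (this is precisely where the flow forms its singularities), no global $C^2$ bound can hold, and so the maximum principle cannot be run on all of $X$. The role of the cohomological positivity (\ref{Cihere})--(\ref{alpham}) and its associated singular weight $\sum_i a_i\log|s_i|^2_{h_i}$ is exactly to confine the maximum principle to the region away from the curves; balancing the constant $A$ against the lower bound on $\chi_{\varphi}$ and the positivity of $\chi-\omega-\sum_i a_i R_{h_i}$ is the delicate point, and is what Theorem~1.3 of \cite{SW} supplies.
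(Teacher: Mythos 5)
Your proposal is correct and follows essentially the same route as the paper, which simply invokes the uniform $C^0$ bound of Proposition~\ref{p1} together with Theorem~1.3 of \cite{SW} (whose proof is exactly the weighted maximum-principle second-order estimate you sketch, using the singular weight $\sum_i a_i\log|s_i|^2_{h_i}$ and the positivity (\ref{alpham})) and then the standard local parabolic Evans--Krylov and Schauder theory for (\ref{j-flow}). You have merely unpacked the cited ingredients, and the details you supply are accurate.
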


\section{Proof of the Main Theorem} \label{sect:thm}

Again we assume in this section that $[\chi]$ is scaled so that $c=1$.  Before proving the main theorem we first discuss the $\mathcal{J}$ and $\mathcal{I}$-functionals. Define $\mathcal{J}_{\omega,\chi}$ and $\mathcal{I}_{\omega,\chi}$ by
\begin{align*}
\mathcal{J}_{\omega,\chi}(\varphi):&=\int_{0}^1 \int_X \dot{\varphi_t} (2\chi_{\varphi_t}\wedge \omega-\chi_{\varphi_t}^2) dt, \\
\mathcal{I}_{\omega,\chi}(\varphi):&= \int_{0}^1 \int_X \dot{\varphi_t} \chi_{\varphi_t}^2 dt,
\end{align*}
where $\varphi_t$ is a smooth path in $\mathcal{P}_{\chi}$ connecting $0$ and $\varphi$. For simplicity, we will omit the subscripts.

Note that if $\varphi(t)$ is the solution of the $J$-flow, then
\begin{align}
\frac{d}{dt} \mathcal{J}(\varphi(t))=-\int_X \dot{\varphi}(t)^2 \chi_{\varphi(t)}^2, \quad \frac{d}{dt}\mathcal{I}(\varphi(t))=0.
\end{align}
In particular, the $J$-flow is the gradient flow of $\mathcal{J}$.

One can write explicit formulae for $\mathcal{J}$, $\mathcal{I}$ as follows:
\begin{align} \label{Jform}
\mathcal{J}(\varphi) &= \int_X \varphi(\chi_{\varphi}\wedge \omega +\chi\wedge \omega)-\frac{1}{3}\int_X \varphi(\chi_{\varphi}^2+\chi_{\varphi}\wedge \chi+\chi^2) ,\\ \label{Iform}
\mathcal{I}(\varphi) &=\frac{1}{3} \int_X \varphi(\chi_{\varphi}^2+\chi_{\varphi}\wedge \chi+\chi^2).
\end{align}
Thus an immediate corollary of Proposition~\ref{p1} is:

\begin{prop} There exists a uniform constant $C$ such that, for $\varphi(t)$ the solution of the $J$-flow, we have
\begin{align*}
\mathcal{J}(\varphi(t))\geq -C
\end{align*}
for all $t\geq0$.
\end{prop}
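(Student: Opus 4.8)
The plan is to read off the bound directly from the closed-form expression (\ref{Jform}) for $\mathcal{J}$ together with the uniform $C^0$ estimate of Proposition~\ref{p1}. The essential observation is that every term in (\ref{Jform}) is the integral of $\varphi$ against a \emph{nonnegative} top-degree form whose total mass is a fixed topological quantity, constant along the flow. There is essentially no analytic content beyond Proposition~\ref{p1}: the proposition is genuinely an immediate corollary.

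First I would recall that on the K\"ahler surface $X$ the forms $\chi_{\varphi}$, $\chi$ and $\omega$ are all positive $(1,1)$-forms, so each of the wedge products $\chi_{\varphi}\wedge\omega$, $\chi\wedge\omega$, $\chi_{\varphi}^2$, $\chi_{\varphi}\wedge\chi$ and $\chi^2$ is a nonnegative $(2,2)$-form, i.e.\ a positive measure on $X$. Moreover, since $[\chi_{\varphi(t)}]=[\chi]$ for all $t$, the integral over $X$ of each of these forms depends only on the classes $[\chi]$ and $[\omega]$; explicitly,
\[
\int_X \chi_{\varphi}\wedge\omega = \int_X \chi\wedge\omega = [\chi]\cdot[\omega], \qquad \int_X \chi_{\varphi}^2 = \int_X \chi_{\varphi}\wedge\chi = \int_X \chi^2 = [\chi]^2.
\]
These are uniform constants, independent of $t$, since adding $dd^c\varphi$ does not change the de Rham class.

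Next I would estimate $\mathcal{J}(\varphi(t))$ crudely in absolute value. Pulling the supremum of $|\varphi(t)|$ out of each integral in (\ref{Jform}) and using the nonnegativity of the measures above gives
\[
|\mathcal{J}(\varphi(t))| \le \norm{\varphi(t)}_{C^0(X)}\left(2\,[\chi]\cdot[\omega] + [\chi]^2\right).
\]
Applying the uniform bound $\norm{\varphi(t)}_{C^0(X)}\le C$ from Proposition~\ref{p1} then yields a two-sided uniform bound on $\mathcal{J}(\varphi(t))$, and in particular the lower bound $\mathcal{J}(\varphi(t))\ge -C$ claimed in the statement.

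There is no real obstacle here: the entire strength of the estimate is carried by the $C^0$ bound of Proposition~\ref{p1}, and the remaining argument is the elementary observation that the cohomological masses serving as coefficients are finite and $t$-independent. The only point I would state with care is the topological invariance of these masses along the flow, which is what lets the single scalar bound on $\norm{\varphi(t)}_{C^0(X)}$ control the whole functional uniformly in time.
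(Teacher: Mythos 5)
Your proof is correct and follows exactly the paper's route: the paper states this proposition as an immediate corollary of Proposition~\ref{p1} via the explicit formula (\ref{Jform}), which is precisely the argument you spell out (with the correct constant $2[\chi]\cdot[\omega]+[\chi]^2$ coming from the fixed cohomological masses of the nonnegative wedge products).
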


In what follows, we will need to make use of a simple continuity-type result for the $\mathcal{I}$ and $\mathcal{J}$ functionals.

\begin{lem} \label{lem:cty}  Let $\varphi_j \in \mathcal{P}_{\chi}$ and let $\varphi$ be a continuous function on $X$ satisfying $\chi + dd^c \varphi \ge 0$.  Let $Y$ be a proper subvariety of $X$. Suppose that
\begin{enumerate}
\item[(a)] there exists $C$ such that $\| \varphi_j \|_{C^0(X)} \le C$.
\item[(b)] $\varphi_j \rightarrow \varphi$ in $C^{\infty}_{\emph{loc}}(X\setminus Y)$ as $j \rightarrow \infty$.
\end{enumerate}
Then
$$\mathcal{J}(\varphi_j) \rightarrow \mathcal{J}(\varphi) \quad \textrm{and} \quad \mathcal{I}(\varphi_j) \rightarrow \mathcal{I}(\varphi) \quad \textrm{as } j \rightarrow \infty.$$
\end{lem}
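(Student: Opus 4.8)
The plan is to reduce everything to the explicit expressions (\ref{Jform}) and (\ref{Iform}): both $\mathcal{J}$ and $\mathcal{I}$ are fixed linear combinations of the five integrals $\int_X \varphi\, \chi_\varphi^2$, $\int_X\varphi\,\chi_\varphi\wedge\chi$, $\int_X\varphi\,\chi_\varphi\wedge\omega$, $\int_X\varphi\,\chi^2$ and $\int_X\varphi\,\chi\wedge\omega$, so it suffices to prove convergence of each of these under the hypotheses (a) and (b). Note first that hypothesis (b) forces the limit $\varphi$ to be smooth on $X\setminus Y$, so that $\chi_\varphi$ and all the measures above are smooth there. The two integrals $\int_X\varphi_j\,\chi^2$ and $\int_X\varphi_j\,\chi\wedge\omega$ are paired against the fixed smooth measures $\chi^2$ and $\chi\wedge\omega$; since $Y$ has measure zero, hypotheses (a) and (b) give $\varphi_j\to\varphi$ boundedly and almost everywhere, and these two terms converge by dominated convergence.

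It remains to treat the three terms $\int_X\varphi_j\,T_j$ with $T_j = \chi_{\varphi_j}^2$, $\chi_{\varphi_j}\wedge\chi$, or $\chi_{\varphi_j}\wedge\omega$; write $T$ for the corresponding limiting measure $\chi_\varphi^2$, $\chi_\varphi\wedge\chi$, $\chi_\varphi\wedge\omega$. Fix $\eta>0$ and let $U_\delta=\{\,\mathrm{dist}(\cdot,Y)<\delta\,\}$. I would split
\[
\int_X\varphi_j T_j-\int_X\varphi T=\int_{X\setminus U_\delta}(\varphi_j T_j-\varphi T)+\int_{U_\delta}\varphi_j T_j-\int_{U_\delta}\varphi T.
\]
On the compact set $X\setminus U_\delta\subset X\setminus Y$, hypothesis (b) gives $\varphi_j T_j\to\varphi T$ uniformly, so the first term tends to $0$ as $j\to\infty$ for each fixed $\delta$. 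The last term is bounded by $\|\varphi\|_{C^0}\,T(U_\delta)$, which I will argue tends to $0$ as $\delta\to0$. The whole difficulty is therefore concentrated in the middle term $\int_{U_\delta}\varphi_j T_j$, bounded by $\|\varphi_j\|_{C^0}\,T_j(U_\delta)\le C\,T_j(U_\delta)$: I must control the Monge--Amp\`ere mass $T_j(U_\delta)$ near $Y$, uniformly in $j$. This is the main obstacle, since a priori the masses $T_j$ could concentrate on the pluripolar set $Y$.

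To handle the mass concentration I would choose a cutoff $\rho=\rho_\delta$ with $0\le\rho\le1$, $\rho\equiv1$ on $U_{\delta/2}$ and $\mathrm{supp}\,\rho\subset U_\delta$, so that $T_j(U_{\delta/2})\le\int_X\rho\,T_j$. The key point is that $dd^c\rho$ is supported in the annulus $A_\delta=U_\delta\setminus U_{\delta/2}$, which is a compact subset of $X\setminus Y$ where the convergence in (b) is in force. Expanding $T_j$ and integrating by parts (legitimate since $\chi_{\varphi_j}$, $\omega$ and $\chi$ are closed) puts a factor $dd^c\rho$ into every term involving $dd^c\varphi_j$, thereby localizing the whole integral to $A_\delta$; for instance, in the hardest term, $\int_X\rho\,(dd^c\varphi_j)^2=\int_{A_\delta}\varphi_j\,dd^c\rho\wedge dd^c\varphi_j$. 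Since $\varphi_j\to\varphi$ together with all derivatives on $A_\delta$, for each fixed $\delta$ these localized integrals converge as $j\to\infty$, giving $\lim_{j\to\infty}\int_X\rho\,T_j=\int_X\rho\,T\le T(U_\delta)$. Hence $\limsup_{j\to\infty}T_j(U_{\delta/2})\le T(U_\delta)$.

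Finally, I would invoke the Bedford--Taylor theory: since $\varphi$ is continuous (hence locally bounded) and $\chi$-plurisubharmonic, the mixed Monge--Amp\`ere measures $\chi_\varphi^2$, $\chi_\varphi\wedge\chi$ and $\chi_\varphi\wedge\omega$ put no mass on pluripolar sets, and in particular none on the analytic subvariety $Y$. Thus $T(U_\delta)\downarrow T(Y)=0$ as $\delta\to0$. Combining the three estimates: given $\eta>0$, first choose $\delta$ so small that $T(U_\delta)<\eta$ (controlling both the last term and, in the limit, the middle term), and then let $j\to\infty$; this yields $\limsup_{j\to\infty}\left|\int_X\varphi_j T_j-\int_X\varphi T\right|\le C\eta$. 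Letting $\eta\to0$ establishes the three remaining convergences, and hence $\mathcal{J}(\varphi_j)\to\mathcal{J}(\varphi)$ and $\mathcal{I}(\varphi_j)\to\mathcal{I}(\varphi)$.
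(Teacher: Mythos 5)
Your argument is correct, but it takes a genuinely different route from the paper's. The paper controls the mass of all the measures $\beta\wedge\gamma$ (with $\beta,\gamma\in\{\omega,\chi,\chi_{\varphi_j}\}$) on shrinking neighborhoods $Y_k$ of $Y$ \emph{uniformly in $j$} in one blow: since the $\varphi_j$ are uniformly bounded $\chi$-psh functions, these masses are dominated by the Ko{\l}odziej capacity $\mathrm{Cap}_\chi(Y_k)$, which tends to $\mathrm{Cap}_\chi(Y)=0$ because $Y$ is pluripolar; combined with (b) on the complement this gives the result directly. You instead obtain the uniform non-concentration by a Chern--Levine--Nirenberg-type localization: integrating by parts against the cutoff $\rho_\delta$ moves every $dd^c\varphi_j$ factor onto $dd^c\rho_\delta$, which is supported in an annulus compactly contained in $X\setminus Y$ where (b) applies, yielding $\limsup_j T_j(U_{\delta/2})\le T(U_\delta)$; you then finish with the classical Bedford--Taylor fact that the Monge--Amp\`ere measure of the continuous $\chi$-psh limit $\varphi$ charges no pluripolar set. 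Your version is more self-contained (Stokes' theorem plus one classical fact about the single limit measure $T$, rather than capacity estimates for the whole sequence), at the cost of a longer computation; the paper's capacity argument is shorter and treats all five integrals in the expressions (\ref{Jform})--(\ref{Iform}) at once, including the two against the fixed smooth measures that you dispatch separately by dominated convergence. Two small points to tidy: your decomposition isolates $U_\delta$ as the bad region while your mass estimate controls $T_j(U_{\delta/2})$, so either take $\rho_\delta\equiv 1$ on $U_\delta$ with support in $U_{2\delta}$ or split the integral at $U_{\delta/2}$; and when you reassemble $\lim_j\int_X\rho\,T_j$ into $\int_X\rho\,T$ you are implicitly invoking the integration-by-parts formula for the Bedford--Taylor product of the merely continuous $\varphi$ --- this is standard for locally bounded $\chi$-psh functions, but deserves a citation.
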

\begin{proof}  The proof is a simple exercise in pluripotential theory (we refer the reader to \cite{K2} for an introduction to this theory).  For the convenience of the reader we sketch here the proof. Note first that for $\varphi$ continuous with $\chi+dd^c \varphi \ge 0$, the quantities $\chi_{\varphi}^2$, $\chi \wedge \chi_{\varphi}$ and $\chi_{\varphi} \wedge \omega$ define finite measures on $X$  and hence 
 from (\ref{Jform}) and (\ref{Iform}) the functionals
$\mathcal{I}(\varphi)$ and $\mathcal{J}(\varphi)$ are well-defined.

  We may choose a sequence of open tubular neighborhoods $Y_k$ of $Y$ such that $Y_k \downarrow Y$ as $k \rightarrow \infty$.   Since $Y$ is pluripolar, the capacity 
 $\textrm{Cap}_{\chi}(Y)$ of $Y$ with respect to $\chi$ (in the sense of Kolodziej \cite{K1}) is zero. 
By the properties of this capacity (see \cite{GZ} for example) we have $$\lim_{k \rightarrow \infty} \textrm{Cap}_{\chi}(Y_k) = \textrm{Cap}_{\chi}(Y) =0.$$  
Since the $\varphi_j$ are uniformly bounded, it follows that $\int_{Y_k} \varphi_j \beta \wedge \gamma \rightarrow 0$ as $k \rightarrow \infty$, uniformly in $j$, where $\beta, \gamma$ are each one of $\omega, \chi$ or  $\chi_{\varphi_j}$.  The same holds if we replace  $\varphi_j$ by $\varphi$.  The result then follows from the expressions 
  (\ref{Jform}) and (\ref{Iform}) together with condition (b). 
\end{proof}

Finally we give the proof of the main theorem.

\begin{proof} [Proof of Theorem~\ref{main}]

Since $\mathcal{J}$ is decreasing and bounded from below, there exists a constant $C$ such that
\begin{align} \label{intbd}
\int_0^{\infty} \int_X \dot{\varphi}(t)^2 \chi_{\varphi(t)}^2 dt < C.
\end{align}
We claim that for each fixed point $p \in X \setminus \bigcup C_i$,
 $\dot{\varphi}(p, t) \rightarrow 0$ as $t \rightarrow \infty$.  Indeed, suppose not.  Then there exists $\varepsilon >0$ and a sequence of times $t_i \rightarrow \infty$ such that
 $|\dot{\varphi} (t_i)| >\varepsilon $ for all $i$.  But since we have bounds for $\dot{\varphi}$ and all its time and space derivatives in a fixed neighborhood $U$, say, of $p$ with $U \subset X \setminus \bigcup C_i$, it follows that $| \dot{\varphi} (t)| > \varepsilon /2$ for $t \in [t_i, t_i+\delta]$ for a uniform $\delta>0$.  This contradicts (\ref{intbd}) and establishes the claim.

Since we have $C^{\infty}_{\textrm{loc}} (X \setminus \bigcup C_i)$ bounds for $\dot{\varphi}$,  the uniqueness of limits implies that $\dot{\varphi}$ converges to zero in  $C^{\infty}_{\textrm{loc}} (X \setminus \bigcup C_i)$.

We have uniform $C^{\infty}$ bounds for $\varphi(t)$ on compact subsets of $X \setminus \bigcup C_i$, and hence we can apply the Arzela-Ascoli theorem to see that for a sequence of times $t_i \rightarrow \infty$ we have $\varphi(t_i) \rightarrow \varphi_{\infty}$ for a smooth (bounded) function $\varphi_{\infty}$ on $X \setminus \bigcup C_i$.  Moreover, since $\dot{\varphi} \rightarrow 0$, $\varphi_{\infty}$ satisfies the equation $\chi_{\varphi_{\infty}}^2=2 \chi_{\varphi_{\infty}} \wedge \omega  $ as in the statement of the theorem.

We also have $\mathcal{I}(\varphi_{\infty})=\lim_{t \rightarrow \infty} \mathcal{I}( \varphi(t)) = \mathcal{I}(\varphi_0)$, using Lemma \ref{lem:cty} and the fact that $\mathcal{I}$ is constant along the flow.  Applying Theorem \ref{thm:EGZ1}, we know that (\ref{criticaleqn}) has a unique solution up to the addition of a constant.  Thus $\varphi_{\infty}$ is the unique solution of (\ref{criticaleqn}) subject to the condition $\mathcal{I}(\varphi_{\infty}) =\mathcal{I}(\varphi_0)$.

Finally we claim that $\varphi(t)$ converges in $C^{\infty}_{\textrm{loc}} (X \setminus \bigcup C_i)$ to $\varphi_{\infty}$.  Suppose not.  Then there exists $\varepsilon >0$ and a sequence of times $t_i \rightarrow \infty$ such that
$\| \varphi(t_i) - \varphi_{\infty} \|_{C^k(K)} > \varepsilon $ for all $i$, for some integer $k$ and compact  $K \subset X \setminus \bigcup C_i$.  Since we have uniform $C^{\infty}$ bounds for $\varphi(t)$ on $K$ we can pass to a subsequence and assume that $\varphi(t_i)$ converges to a function $\varphi'_{\infty} \neq \varphi_{\infty}$.  But  $\varphi'_{\infty}$ will also satisfy the equation
$ {\chi}_{\varphi'_{\infty}}^2=2 \chi_{\varphi'_{\infty}} \wedge \omega $ and $\mathcal{I}(\varphi'_{\infty}) =\mathcal{I}(\varphi_0)$ contradicting the uniqueness.
\end{proof}

As a consequence:

\begin{cor} \label{c1}
The $\mathcal{J}$-functional is bounded from below on $\mathcal{P}_{\chi}$.
\end{cor}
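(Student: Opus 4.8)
The plan is to exploit three facts established above: that $\mathcal{J}$ is monotone decreasing along the $J$-flow (it is the gradient flow of $\mathcal{J}$), that the flow converges by Theorem \ref{main} together with the continuity Lemma \ref{lem:cty}, and --- crucially --- that with the normalization $c=1$ the functional $\mathcal{J}$ is invariant under the addition of constants. The only difficulty in deducing a lower bound on all of $\mathcal{P}_{\chi}$ from a single flow trajectory is that running the flow from some $\varphi_0$ only bounds $\mathcal{J}(\varphi_0)$ below by $\mathcal{J}(\varphi_{\infty})$, where a priori the limit $\varphi_{\infty}$ depends on the initial data. I would resolve this by showing that $\mathcal{J}(\varphi_{\infty})$ is in fact the same universal constant for every starting point, and this independence of the bound from $\varphi_0$ is the main obstacle to address.

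First I would record the constant-invariance. Using the explicit formula (\ref{Jform}), replacing $\varphi$ by $\varphi + s$ for a constant $s$ leaves every form $\chi_{\varphi}$ unchanged and alters $\mathcal{J}$ only through the explicit factor $\varphi$; a direct computation using $\int_X \chi_{\varphi}\wedge\omega = \int_X \chi\wedge\omega = [\chi]\cdot[\omega]$ and $\int_X \chi_{\varphi}^2 = \int_X \chi_{\varphi}\wedge\chi = \int_X \chi^2 = [\chi]^2$ shows that $\mathcal{J}$ changes by $s\left(2[\chi]\cdot[\omega] - [\chi]^2\right)$. This vanishes precisely because $c = \frac{2[\chi]\cdot[\omega]}{[\chi]^2} = 1$, so $\mathcal{J}(\varphi + s) = \mathcal{J}(\varphi)$ for every constant $s$.

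Next I would pin down the limit. Applying Theorem \ref{thm:EGZ1} to the equation $(\alpha + dd^c \varphi)^2 = \omega^2$ with $\alpha = \chi - \omega \ge 0$, the critical equation (\ref{criticaleqn}) has a unique continuous solution up to an additive constant; let $\psi$ denote the normalized one with $\sup_X \psi = 0$. Any limit $\varphi_{\infty}$ produced by Theorem \ref{main} is a continuous solution of (\ref{criticaleqn}), hence equals $\psi$ plus a constant, and by the invariance just proved $\mathcal{J}(\varphi_{\infty}) = \mathcal{J}(\psi) =: M$. This $M$ is a fixed finite number, finite because $\psi$ is continuous (bounded) and the measures $\chi_{\psi}^2$, $\chi_{\psi}\wedge\chi$, $\chi_{\psi}\wedge\omega$ appearing in (\ref{Jform}) are finite, as noted in the proof of Lemma \ref{lem:cty}.

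Finally, given an arbitrary $\varphi_0 \in \mathcal{P}_{\chi}$, I would run the $J$-flow from $\varphi_0$ (long-time existence holds for any initial data). Monotonicity of $\mathcal{J}$ gives $\mathcal{J}(\varphi_0) \ge \mathcal{J}(\varphi(t))$ for all $t \ge 0$, while Theorem \ref{main}, the uniform $C^0$ bound of Proposition \ref{p1}, and Lemma \ref{lem:cty} (with $Y = \bigcup C_i$) give $\mathcal{J}(\varphi(t)) \to \mathcal{J}(\varphi_{\infty}) = M$ as $t \to \infty$. Therefore $\mathcal{J}(\varphi_0) \ge M$, and since $\varphi_0$ was arbitrary, $\mathcal{J} \ge M$ on all of $\mathcal{P}_{\chi}$. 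The crux of the argument is exactly that the constant-invariance of $\mathcal{J}$, combined with uniqueness-up-to-constant of the critical metric, makes the limiting value $M$ independent of $\varphi_0$.
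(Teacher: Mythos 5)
Your proof follows the same route as the paper: run the $J$-flow from an arbitrary $\varphi_0\in\mathcal{P}_{\chi}$, use that $\mathcal{J}$ decreases along the flow, and pass to the limit via Theorem \ref{main} and Lemma \ref{lem:cty} to conclude $\mathcal{J}(\varphi_0)\ge\mathcal{J}(\varphi_{\infty})$. The one thing you add --- a worthwhile point the paper leaves implicit --- is the verification that the limiting value is a universal constant independent of $\varphi_0$: your computation that $\mathcal{J}(\varphi+s)-\mathcal{J}(\varphi)=s\left(2[\chi]\cdot[\omega]-[\chi]^2\right)=0$ under the normalization $c=1$, combined with uniqueness of the critical solution up to an additive constant, correctly pins down the lower bound as $\mathcal{J}(\psi)$.
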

\begin{proof}
Take any $\varphi_{_0} \in \mathcal{P}_{\chi}$.   Then running the $J$-flow from $\varphi_{_0}$, which by Theorem \ref{main} converges to $\varphi_{\infty}$, we obtain (applying Lemma \ref{lem:cty}),
\begin{align*}
\mathcal{J}(\varphi_{_0}) \ge \lim_{t \rightarrow \infty} \mathcal{J}(\varphi(t)) = \mathcal{J} (\varphi_{\infty}),
\end{align*}
since $\mathcal{J}$ is decreasing along the flow.
\end{proof}

Finally:

\begin{proof}[Proof of Corollary \ref{t2}]
Combine Corollary ~\ref{c1} and Lemma 4.1 of ~\cite{SW}.
\end{proof}

\section{Further questions}

Finally, we propose some further questions.

\begin{question} \label{q1}
In general, it does not appear to be known whether a nef and big class on a K\"ahler surface can always be represented by a smooth nonnegative $(1,1)$ form (for a counterexample in higher dimensions see Example 5.4 in \cite{BEGZ}).  However, an example of Zariski shows that a nef and big class is not necessarily semiample (see Section 2.3A of \cite{Laz}).   Also,  the nef condition alone is not sufficient for the existence of a nonnegative representative  (see Example 1.7 of ~\cite{DPS}).
What can be proved if we assume only that $[\chi-\omega]$ is nef and big? In this case, by~\cite{BEGZ}, we know that we can produce a solution  $\psi$ of (\ref{dma}) with very mild singularities along $C_i$ (less than any log pole). Can it be translated into an estimate for the solution $\varphi(t)$ of the $J$-flow? Does it imply that the $J$-functional is bounded from below?
\end{question}

\begin{question} The results of ~\cite{FL2} indicate a possible picture when $[\chi]$ is outside of $\mathcal{C}_{\omega}$. But they assume both $\omega$ and $\chi$ are of Calabi ansatz. Can one prove a general result on K\"{a}hler surfaces? In this case, presumably the $\mathcal{J}$-functional is not bounded from below.
\end{question}

\begin{question}
For general $n$, it would be interesting to investigate the weak solution of the critical equation (\ref{critical}) when $[\chi]$ does not lie in  $\mathcal{C}_{\omega}$.
\end{question}

\end{document}